\newtheorem{theorem}{Theorem}
\Crefname{conjecture}{Conjecture}{Conjectures}
\theoremstyle{plain}
\theoremstyle{plain}
\author{Robert Schneider}
\title{Infinite series for $\pi/3$ and other identities}
\begin{document}
 
\begin{abstract}
Using techniques from calculus, we combine classical identities for $\pi$, $\operatorname{ln}2$, and harmonic numbers, to arrive at a nice infinite series formula for $\pi/3$ that does not appear to be well known. 
In addition, we give twenty-seven related identities involving $\pi$ and other irrational numbers.
\end{abstract}

\maketitle

\centerline{\it In celebration of Pi Day 2022}
\

\section{Main identity and proof}
Recall the identity known in the literature as the Gregory--Leibniz formula for $\pi$ \cite{pi}:
\begin{equation}\label{Leibniz}
\frac{\pi}{4}\  =\  1-\frac{1}{3}+\frac{1}{5}-\frac{1}{7}+\frac{1}{9}-\dots .
\end{equation}
This identity is immediate from the Maclaurin series expansion of $\operatorname{arctan}x$ at $x=1$. Here, 
we prove another  infinite series formula that can be used to compute the value of $\pi$. 

\begin{theorem}\label{thm1}
We have the identity
\begin{equation*}\frac{\pi}{3}\  =\  \sum_{n=1}^{\infty}\frac{1}{n(2n-1)(4n-3)}. 
\end{equation*}
\end{theorem}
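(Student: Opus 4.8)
The plan is to evaluate the series by partial fractions and then resum carefully, the one subtlety being that the three pieces that arise are individually divergent and so must be handled through their partial sums. First I would decompose the summand: writing
\[
\frac{1}{n(2n-1)(4n-3)}=\frac{A}{n}+\frac{B}{2n-1}+\frac{C}{4n-3},
\]
clearing denominators and evaluating at $n=0$, $n=\tfrac12$, $n=\tfrac34$ yields $A=\tfrac13$, $B=-2$, $C=\tfrac83$. Since the original terms are $O(n^{-3})$, the series converges absolutely and equals $\lim_{N\to\infty}S_N$, where
\[
S_N=\sum_{n=1}^{N}\left(\frac{1}{3n}-\frac{2}{2n-1}+\frac{8}{3(4n-3)}\right);
\]
the point to respect is that one cannot split this into three separate series, since $\sum 1/n$, $\sum 1/(2n-1)$ and $\sum 1/(4n-3)$ all diverge.

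Next I would put $S_N$ into closed form using harmonic numbers $H_m=\sum_{k=1}^{m}\tfrac1k$. Peeling off the even terms gives $\sum_{n=1}^{N}\tfrac1{2n-1}=H_{2N}-\tfrac12H_N$. Pairing residues modulo $4$, I would combine $\sum_{n=1}^{N}\big(\tfrac1{4n-3}+\tfrac1{4n-1}\big)=\sum_{m=1}^{2N}\tfrac1{2m-1}=H_{4N}-\tfrac12H_{2N}$ with $\sum_{n=1}^{N}\big(\tfrac1{4n-3}-\tfrac1{4n-1}\big)=\sum_{m=1}^{2N}\tfrac{(-1)^{m-1}}{2m-1}=:G_{2N}$ to get $\sum_{n=1}^{N}\tfrac1{4n-3}=\tfrac12G_{2N}+\tfrac12H_{4N}-\tfrac14H_{2N}$. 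Substituting these into $S_N$ and collecting like terms, the harmonic contributions collapse to
\[
S_N=\frac{4}{3}\big(H_N-2H_{2N}+H_{4N}\big)+\frac{4}{3}\,G_{2N}.
\]

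Finally I would let $N\to\infty$, invoking two classical facts. From $H_{2N}-H_N\to\ln 2$ one obtains $H_N-2H_{2N}+H_{4N}=(H_{4N}-H_{2N})-(H_{2N}-H_N)\to\ln 2-\ln 2=0$, so the entire $\ln 2$ (harmonic-number) part of $S_N$ cancels; and the Gregory--Leibniz formula \eqref{Leibniz} gives $G_{2N}\to\tfrac{\pi}{4}$. Hence $S_N\to\frac43\cdot 0+\frac43\cdot\frac{\pi}{4}=\frac{\pi}{3}$, as claimed. I expect the only real obstacle to be the bookkeeping in this last step: one must resist regrouping the partial-fraction expansion into three divergent tails and instead carry the exact finite sums far enough to watch the $\ln 2$ terms annihilate one another; the rest is elementary algebra together with the two limits the paper already treats as known.
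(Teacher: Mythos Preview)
Your proof is correct, and it takes a genuinely different route from the paper's. The paper does not decompose the summand directly; instead it first builds, through a chain of manipulations with the Euler--Mascheroni constant $\gamma$ and partial sums of odd reciprocals, the auxiliary identity
\[
\sum_{n\ge1}\frac{1}{(4n-2)(4n-3)}=\frac{\pi}{8}+\frac{1}{4}\ln 2,
\]
then subtracts the easy identity $\sum_{n\ge1}\frac{1}{4n(4n-2)}=\frac{1}{4}\ln 2$ and rescales. Your partial-fraction approach is cleaner and more systematic: expressing the finite sum $S_N$ exactly in terms of $H_N$, $H_{2N}$, $H_{4N}$ and the Leibniz partial sum $G_{2N}$ makes the cancellation $(H_{4N}-H_{2N})-(H_{2N}-H_N)\to 0$ of all harmonic-number contributions completely transparent, and $\gamma$ never enters; indeed you do not even need the \emph{value} $\ln 2$, only that the two differences share a common limit. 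What the paper's detour buys, on the other hand, is a stock of intermediate identities (for instance the value of $\sum\frac{1}{(4n-2)(4n-3)}$ itself) that feed the catalogue of further formulas in its second section, so each method has its own payoff.
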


\begin{proof}
We begin with the Maclaurin series for the natural logarithm of 2: 
\begin{equation}\label{A}
\sum_{n=1}^{\infty}\frac{(-1)^{n+1}}{n}=\lim_{n\to\infty}\left( 1-\frac{1}{2}+\frac{1}{3}-\frac{1}{4}+-...+\frac{1}{2n-1}-\frac{1}{2n}  \right)=\operatorname{ln} 2.
\end{equation}
We may rewrite (\ref{A}) as 
$$\sum_{n=1}^{\infty}\left( \frac{1}{2n-1}-\frac{1}{2n} \right)= \sum_{n=1}^{\infty} \frac{1}{2n(2n-1)}=\operatorname{ln} 2,$$
thus
\begin{equation}\label{B}
\sum_{n=1}^{\infty} \frac{1}{4n(4n-2)}=\frac{1}{4}\sum_{n=1}^{\infty} \frac{1}{2n(2n-1)}=\frac{1}{4}\operatorname{ln} 2.
\end{equation}  
Euler \cite{Dunham} found that the difference between the $k$th harmonic number and $\operatorname{ln}k$ approaches a constant $\gamma = 0.5772...$ (the so-called Euler-Mascheroni constant), so we have
\begin{equation}\label{C}
\lim_{n \to \infty} \left( 1+\frac{1}{2}+\frac{1}{3}+\frac{1}{4}+...+\frac{1}{2n}-\operatorname{ln} (2n) \right)=\gamma.
\end{equation}
Adding limits from equations $(\ref{A})$ and $(\ref{C})$, then dividing by $2$ (and writing $\operatorname{ln}(2n)=\operatorname{ln}2+\operatorname{ln}n$), we find
$$\lim_{n\to\infty}\left( 1+\frac{1}{3}+\frac{1}{5}+...+\frac{1}{2n-1}-\frac{1}{2}(\operatorname{ln}2+\operatorname{ln} n) \right)=\frac{1}{2}(\operatorname{ln} 2 + \gamma),$$
thus
\begin{equation}\label{D}
\lim_{n \to \infty} \left( 1+\frac{1}{3}+\frac{1}{5}+...+\frac{1}{2n-1}-\frac{1}{2}\operatorname{ln} n \right)=\operatorname{ln} 2 + \frac{1}{2}\gamma.
\end{equation}
Splitting the sum on the left in half and reorganizing, equation (\ref{D}) may be rewritten 
\begin{equation}\label{E}
\lim_{n \to \infty} \left( \frac{1}{n+1}+\frac{1}{n+3}+...+\frac{1}{2n-1}\right)
\end{equation}
\begin{equation*}
=\operatorname{ln} 2 + \frac{1}{2}\gamma-\lim_{n \to \infty} \left( 1+\frac{1}{3}+\frac{1}{5}+...+\frac{1}{n-1}-\frac{1}{2}\operatorname{ln} n \right).
\end{equation*}
Now, substitute $n/2$ for $n$ in (\ref{D}), then 
subtract $1/2\cdot \operatorname{ln} 2$ from  both sides, to arrive at 
\begin{equation}\label{F}
\frac{1}{2}\operatorname{ln} 2 + \frac{1}{2}\gamma=\lim_{n \to \infty} \left( 1+\frac{1}{3}+\frac{1}{5}+...+\frac{1}{n-1}-\frac{1}{2}\operatorname{ln} n \right).
\end{equation}
Adding the corresponding sides of $(\ref{E})$ and $(\ref{F})$, 
then subtracting $1/2\cdot \operatorname{ln} 2+\gamma/2$ from the resulting equation, gives
\begin{equation}\label{G}
\lim_{n \to \infty} \left( \frac{1}{n+1}+\frac{1}{n+3}+...+\frac{1}{2n-1}\right)=\frac{1}{2}\operatorname{ln} 2.
\end{equation}
Next we will use Leibniz's formula (\ref{Leibniz}), which we can write in the form 
\begin{equation}\label{H}
\lim_{n\to\infty}\left( 1-\frac{1}{3}+\frac{1}{5}-\frac{1}{7}+-...+\frac{1}{2n-3}-\frac{1}{2n-1}  \right)=\frac{\pi}{4}.
\end{equation}
If we add the limits from $(\ref{D})$ and $(\ref{H})$, then  
divide both sides by $2$, we find 
$$\lim_{n\to\infty}\left( 1+\frac{1}{5}+\frac{1}{9}+...+\frac{1}{2n-3}-\frac{1}{4}\operatorname{ln} n \right)=\frac{\pi}{8}+\frac{1}{2}\left(\operatorname{ln} 2 + \frac{1}{2}\gamma\right).$$
Subtracting $1/2$ times the limit in (\ref{D}) from this equation gives
\begin{flalign*}
\lim_{n\to\infty}\left(1-\frac{1}{2}+\frac{1}{5}-\frac{1}{6}+-...+\frac{1}{2n-3}-\frac{1}{2n-2}\right)
-&\frac{1}{2} \left( \frac{1}{n+1}+\frac{1}{n+3}+...+\frac{1}{2n-1}\right)\\&=\frac{\pi}{8}.
\end{flalign*}
Adding $1/2$ times equation (\ref{G}) to both sides of this expression yields
\begin{equation*}
\lim_{n\to\infty}\left(1-\frac{1}{2}+\frac{1}{5}-\frac{1}{6}+-...+\frac{1}{2n-3}-\frac{1}{2n-2}\right)
=\frac{\pi}{8}+\frac{1}{4}\operatorname{ln} 2,
\end{equation*}
which can be rewritten 
\begin{equation}\label{J}
\sum_{n=1}^{\infty}\left(\frac{1}{4n-3}-\frac{1}{4n-2}\right)=\sum_{n=1}^{\infty}\frac{1}{(4n-2)(4n-3)}=\frac{\pi}{8}+\frac{1}{4}\operatorname{ln} 2.
\end{equation}
Subtracting equation (\ref{B}) from (\ref{J}) yields  
\begin{equation*}
\sum_{n=1}^{\infty}\left(\frac{1}{(4n-2)(4n-3)}-\frac{1}{4n(4n-2)}\right)=\sum_{n=1}^{\infty}\frac{3}{4n(4n-2)(4n-3)}=\frac{\pi}{8}.
\end{equation*}
Finally, multiplication by $8/3$ gives the theorem.
\end{proof}
%
%
%
%

\

\section{Further identities}
During the writing of this paper (2006), the author also found a large number of related summation identities by combining Theorem \ref{thm1} with other equations in the preceding proof, 
along with well-known zeta values such as Euler's beautiful formula (see  \cite{Dunham}),
$$\sum_{n=1}^{\infty}\frac{1}{n^2}\  =\  \frac{\pi^2}{6},$$
and other classical summation identities from \cite{Jolley}. Below, there is a selection of these identities given without proof, loosely organized by number of factors in the denominators of the summands. The first identity follows from the telescoping series $(1-1/2)+(1/2-1/3)+(1/3-1/4)+...$, the second and third identities follow by rewriting  (\ref{A}) and (\ref{H}), respectively, and the rest arise from liberal use of partial fractions and recursion relations. 

The interested reader might like to prove the following results; certainly further identities like these can be discovered: 

%
%

%

%

\begin{equation}
\sum_{n=1}^{\infty}\frac{1}{(2n-1)(2n+1)}=\frac{1}{2}
\end{equation}

\begin{equation}
\sum_{n=1}^{\infty}\frac{1}{(n+1)(2n+1)}=2\operatorname{ln} 2
\end{equation}

\begin{equation}
\sum_{n=1}^{\infty}\frac{1}{(4n-1)(4n-3)}=\frac{\pi}{8}
\end{equation}

\begin{equation}
\sum_{n=1}^{\infty}\frac{1}{(2n-1)(4n-3)}=\frac{\pi+2\operatorname{ln}2}{4}
\end{equation}
\begin{equation}
\sum_{n=1}^{\infty}\frac{1}{n(4n-3)}=\frac{\pi+6\operatorname{ln}2}{6}
\end{equation}

\begin{equation}
\sum_{n=1}^{\infty}\frac{1}{n(4n-1)}=\frac{6\operatorname{ln}2-\pi}{2}
\end{equation}

\begin{equation}
\sum_{n=1}^{\infty}\frac{1}{(2n-1)(4n-1)(4n-3)}=\frac{\operatorname{ln}2}{2}
\end{equation}

\begin{equation}
\sum_{n=1}^{\infty}\frac{1}{n(n+1)(2n+1)}=\frac{2\operatorname{ln} 2-1}{2}
\end{equation}

\begin{equation}
\sum_{n=1}^{\infty}\frac{1}{(4n+1)(4n-1)(4n-3)}=\frac{\pi-2}{16}
\end{equation}

\begin{equation}
\sum_{n=1}^{\infty}\frac{1}{n^2(2n-1)}=\frac{24\operatorname{ln}2-\pi^2}{6}
\end{equation}

\begin{equation}
\sum_{n=1}^{\infty}\frac{1}{n^2(4n-1)}=\frac{72\operatorname{ln}2-12\pi-\pi^2}{6}
\end{equation}

\begin{equation}
\sum_{n=1}^{\infty}\frac{1}{n(n+1)^2}=\frac{12-\pi^2}{6}
\end{equation}

\begin{equation}
\sum_{n=1}^{\infty}\frac{1}{n^2(2n+1)}=\frac{\pi^2+24\operatorname{ln} 2 - 24}{6}
\end{equation}

\begin{equation}
\sum_{n=1}^{\infty}\frac{1}{n^2(n+1)}=\frac{\pi^2-6}{6}
\end{equation}

\begin{equation}
\sum_{n=1}^{\infty}\frac{1}{n^2(n+1)^2}=\frac{\pi^2-9}{3} 
\end{equation}

\begin{equation}
\sum_{n=1}^{\infty}\frac{1}{(2n+1)(2n-1)(4n+1)(4n-1)}=\frac{\pi-3}{6}
\end{equation}

\begin{equation}
\sum_{n=1}^{\infty}\frac{1}{(4n+1)(4n-1)(8n+1)(8n-1)}=\frac{\pi (1+2\sqrt{2})-12}{24}
\end{equation}

\begin{equation}
\sum_{n=1}^{\infty}\frac{1}{n(2n-1)(4n-1)(4n-3)}=\frac{6\operatorname{ln}2-\pi}{3}
\end{equation}

\begin{equation}
\sum_{n=1}^{\infty}\frac{1}{n^2(n+1)(2n+1)}=\frac{\pi^2-12\operatorname{ln} 2}{6}
\end{equation}

\begin{equation}
\sum_{n=1}^{\infty}\frac{1}{n^2(2n-1)(4n-3)}=\frac{\pi^2+8\pi-24\operatorname{ln}2}{18}
\end{equation}

\begin{equation}
\sum_{n=1}^{\infty}\frac{1}{n^2(2n-1)(4n-1)}=\frac{\pi^2+24\pi-120\operatorname{ln}2}{6}
\end{equation}

\begin{equation}
\sum_{n=1}^{\infty}\frac{1}{n^2(2n-1)(4n-1)(4n-3)}=\frac{168\operatorname{ln}2-32\pi-\pi^2}{18}
\end{equation}

\begin{equation}
\sum_{n=1}^{\infty}\frac{1}{n(2n+1)(2n-1)(6n+1)(6n-1)}=\frac{52-32\operatorname{ln}2-27\operatorname{ln}3}{16}
\end{equation}

\begin{equation}
\sum_{n=1}^{\infty}\frac{1}{n(2n+1)(2n-1)(3n+1)(3n-1)}=\frac{19+16\operatorname{ln}2-27\operatorname{ln}3}{10}
\end{equation}

\begin{equation}
\sum_{n=1}^{\infty}\frac{1}{n^3(n+1)^3}=10-\pi^2
\end{equation}

\begin{equation}
\sum_{n=1}^{\infty}\frac{1}{(2n+1)(2n-1)(4n+1)(4n-1)(8n+1)(8n-1)}=\frac{45-\pi (3+8\sqrt{2})}{6}
\end{equation}

\begin{equation}
\sum_{n=1}^{\infty}\frac{1}{n(2n+1)(2n-1)(3n+1)(3n-1)(6n+1)(6n-1)}=\frac{64\operatorname{ln}2+27\operatorname{ln}3-74}{20}
\end{equation}


\  

\section*{Acknowledgments}
Dedicated to my son, mathematician, programmer and musician, Maxwell Schneider. 
This article is a LaTeX transcription of the author's first mathematics paper, written in 2006 at a time when I was almost entirely self taught in mathematics. I am deeply thankful to Paul Blankenship of Bluegrass Community and Technical College for teaching me calculus, encouraging my efforts in analysis and checking my proof of Theorem \ref{thm1}; and to David Leep at the University of Kentucky for mentoring me when I had little formal training in mathematics, and for urging me to write up these results. 

\end{document}